\newtheorem{lemma}{Lemma}
\newtheorem{theorem}{Theorem}
\newtheorem*{proposition}{Kose-Uzawa Theorem}
\newtheorem{corollary}{Corollary}
\newcommand{\half}{\frac{1}{2}}
\renewcommand{\vec}[1]{{\bf #1}}
\DeclareMathOperator*{\argmax}{argmax}
\title{Convergence of gradient descent-ascent analyzed as a Newtonian dynamical   system with dissipation}
\author{H. Sebastian Seung\thanks{Present address: Neuroscience Institute and Computer Science Department, Princeton University. {\tt sseung@princeton.edu}}
\\Howard Hughes Medical Institute and\\
  Dept. of Brain \& Cog. Sci. and Dept. of Physics, MIT}
\begin{document}

\maketitle

\begin{abstract}
  A dynamical system is defined in terms of the gradient of a payoff function.   Dynamical variables are of two types, ascent and descent.  The ascent   variables move in the direction of the gradient, while the descent variables   move in the opposite direction. Dynamical systems of this form or very   similar forms have been studied in diverse fields such as game theory,   optimization, neural networks, and population biology. Gradient   descent-ascent is approximated as a Newtonian dynamical system that conserves   total energy, defined as the sum of the kinetic energy and a potential energy   that is proportional to the payoff function. The error of the approximation   is a residual force that violates energy conservation. If the residual force   is purely dissipative, then the energy serves as a Lyapunov function, and   convergence of bounded trajectories to steady states is guaranteed.  A   previous convergence theorem due to Kose and Uzawa required the payoff   function to be convex in the descent variables, and concave in the ascent   variables. Here the assumption is relaxed, so that the payoff function need   only be globally ``less convex'' or ``more concave'' in the ascent variables   than in the descent variables. Such relative convexity conditions allow the existence of multiple steady states, unlike the convex-concave assumption. When combined with sufficient conditions that   imply the existence of a minimax equilibrium, boundedness of trajectories is   also assured.\footnote{This paper was completed in 2007 but never published. It is being posted in its original form, without updating the references. The hope is that the work might be useful to others, even though it is possible that the results are no longer novel.} 
\end{abstract}

In the dynamical system
\begin{subequations}\label{eq:GradientComponents}
\begin{align}
\frac{dx_i}{dt} & = -\frac{\partial S}{\partial x_i}, \qquad  i=1,\ldots,m \\
\frac{dy_j}{dt} & = \frac{\partial S}{\partial y_j}, \qquad j=1,\ldots,n
\end{align}
\end{subequations}
the $x$ variables move down the gradient of $S$, while the $y$ variables move up the gradient.  Therefore the dynamics will be called {\em gradient   descent-ascent} on the function $S(x_1,\ldots,x_m,y_1,\ldots,y_n)$, which will be assumed twice differentiable. Because of the applications to game theory described later, $S$ will be called the {\em payoff function} of the dynamical system.

For the special cases of pure gradient ascent ($m=0$) and pure gradient descent ($n=0$), it is well-known that bounded trajectories typically converge to steady states. However, for a mixture of gradient descent and ascent, the dynamical behavior depends strongly on the properties of $S$. For example, suppose that $m=n=1$. If $S=(x^2-y^2)/2$, then the dynamics converges to the origin. But if $S=xy$, all nonzero trajectories are periodic orbits.  Although these two functions differ only by a 45 degree rotation in the $xy$ plane, they yield very different dynamical behaviors. 

This paper establishes sufficient conditions for convergence to steady states by constructing a Lyapunov function for gradient descent-ascent.  The convergence theorems are of interest, because they are more powerful than a previous theorem due to Kose \cite{kose1956ssv} and Uzawa \cite{Uzawa58}.  But the methods of proof may actually be more interesting than the theorems themselves.

While the theorems are specific to dynamical systems that can be written exactly as in Eqs.\ \eqref{eq:GradientComponents}, the methods of proof may generalize to other dynamical systems that only resemble gradient descent-ascent.  As will be explained below, such dynamical systems are studied in fields as diverse as game theory \cite{arrow1960sgp,singh2000ncg}, constrained optimization \cite{ArrowHurwicz58gmc1}, 
neural networks \cite{mjolsness1990ato,seung1998mah,gao2004nnc},
and population biology \cite{kerner1997chs,samuelson1974bla}.

Once the Lyapunov function is in hand, the proofs are fairly straightforward. But it may not be obvious how to construct the function in the first place.  It turns out that the Lyapunov function emerges naturally when gradient descent-ascent is approximated as a Newtonian dynamical system that conserves total energy. The approximation is not perfect, as there is a residual force that violates energy conservation. If the residual force is purely dissipative or frictional, the total energy of the system is nonincreasing with time, and serves as a Lyapunov function. This is the case in which convergence can be proven. Otherwise the residual force may both add and remove energy, so that convergence may not occur.

A short review of Lyapunov functions will be helpful\cite{logemann2004abn}. Let $\vec{z}(t)$ be the trajectory of a set of differential equations, and let $L(\vec{z})$ be a function with continuous partial derivatives. If for any trajectory $L(\vec{z}(t))$ is nonincreasing with time, and constant only at steady states of the dynamics, then $L$ is called a Lyapunov function.\footnote{The term Lyapunov-like function might be more accurate, as    Lyapunov's original definition contains more restrictive   conditions.\cite{slotine1991anc}} 
If the steady states of the dynamics are isolated points, then the existence of a Lyapunov function implies that any bounded trajectory converges to a steady state.  \footnote{ This is a corollary   of LaSalle's Invariance Principle\cite{lasalle1960eas}.  Without the   assumption that the steady states are isolated, convergence to a steady state   is not guaranteed. For example, pathological examples are known in which a   pure gradient dynamics approaches a manifold of steady states, but never   converges to a single steady state\cite{curry1944msd, absil16cid}.} Furthermore, if the set $\{\vec{z}|L(\vec{z})\leq c\}$ is bounded for all $c$, then boundedness of trajectories follows. This condition is guaranteed, for example, if $L$ is radially unbounded, i.e., $L(\vec{z})\to\infty$ as $|\vec{z}|\to\infty$.

\section{Applications of gradient descent-ascent}
Dynamical systems that are gradient descent-ascent, or are very similar, have been studied in the fields of game theory, optimization, neural networks, and population biology. 

\subsection{Game theory}
Equations \eqref{eq:GradientComponents} were proposed by Arrow and Hurwicz as a simple model for how agents change their actions when playing a game repeatedly \cite{arrow1960sgp,singh2000ncg}. One can think of the function $S$ as defining a zero-sum game with continuous action spaces. Suppose that $\vec{x}\in R^m$ and $\vec{y}\in R^n$ are actions chosen by two agents, and $S(\vec{x},\vec{y})$ is the payoff from one to the other.  The payer tries to decrease $S$ by gradient descent, while the payee tries to increase $S$ by gradient ascent. 

The replicator dynamics is a popular model for learning matrix games. It can be viewed as a modification of gradient descent-ascent that respects the constraints that $\vec{x}$ and $\vec{y}$ are normalized probability vectors. Hofbauer has observed that the replicator dynamics has a Hamiltonian structure \cite{hofbauer1996edb}, which is related to the present approach of approximating gradient descent-ascent by a conservative Newtonian dynamics.

\subsection{Optimization}
Arrow and Hurwicz proposed gradient descent-ascent as an algorithm for convex optimization with nonnegativity constraints \cite{ArrowHurwicz58gmc1}. The payoff function $S$ is the Lagrangian, the $x_i$ are the primal variables, and the $y_j$ are the Lagrange multipliers or dual variables. In this application, Eqs.\ \eqref{eq:GradientComponents} are modified to respect the nonnegativity constraints, a trick known as gradient projection.  The theorems proven in this paper can be extended to this modification of gradient descent-ascent.


\subsection{Neural networks}
While gradient descent-ascent may not be the most efficient optimization algorithm for software implementation on digital computers, it is still an excellent algorithm for direct hardware implementation.  This was recognized by Kose, who described an analog electronic computer that implemented gradient descent-ascent in his original paper that also contained  a theoretical analysis of convergence \cite{kose1956ssv}.  More recently, artificial neural networks have been constructed for solving optimization problems, and optimization has been proposed as a computational function of biological neural networks \cite{hopfield1986cnc}.  It has been noted that certain neural network models are very similar in form to gradient descent-ascent \cite{mjolsness1990ato,Platt87}. The convergence of such networks can be analyzed using the ideas that are discussed in this paper \cite{seung1998mah}.

\subsection{Population biology}
The Lotka-Volterra equations were proposed as a model for population dynamics. Suppose that the different species in the model can be separated into two groups. Interactions within each group are symmetric, while interaction between the groups are antisymmetric. Then the equations can be viewed as a version of gradient descent-ascent, modified to respect the constraints that population sizes are nonnegative. Volterra originally considered the case of purely antisymmetric interactions, for which the dynamics takes a Hamiltonian form \cite{kerner1997chs,samuelson1974bla}.

\section{Informal summary of convergence results}
In this section, the results of the paper are summarized informally, and compared with previous work.  Fifty years ago, Kose \cite{kose1956ssv} and Uzawa \cite{Uzawa58} considered the case where $S$ is a convex-concave function. 
If $S$ is strictly convex in the descent variables $\vec{x}$ and strictly concave in the ascent variables $\vec{y}$, then gradient descent-ascent converges to a saddle point, if one exists. 
Of the two examples given in the introduction, $S=(x^2-y^2)/2$ is strictly convex-concave, while $S=xy$ is not.  This paper establishes two new convergence theorems based on substantially weaker assumptions. In Theorem 1,
the convex-concave assumption is replaced by
\begin{quote}
1. $S$ is ``globally less convex'' in $\vec{y}$ than in $\vec{x}$ 
(or equivalently ``globally more concave'').
\end{quote}
The meaning of the phrase ``globally less convex'' will be made precise later using the Hessian of $S$.  Assumption 1 is sufficient to guarantee that bounded trajectories converge to steady states.  This means that periodic orbits are impossible; trajectories either converge to steady states, or diverge to infinity.

To exclude the possibility that trajectories diverge, Assumption 1 must be augmented by some other condition. Theorem 2 guarantees boundedness of trajectories by adding one or the other of the following assumptions.
\begin{quote}
2. $U(\vec{x})\equiv \max_\vec{y}S(\vec{x},\vec{y})$ is radially unbounded in $\vec{x}$. 
\end{quote}
\begin{quote}
3. $-V(\vec{y})\equiv -\min_\vec{x}S(\vec{x},\vec{y})$ is radially unbounded in $\vec{y}$. 
\end{quote}
A function $f(\vec{x})$ is defined to be radially unbounded if $f\to\infty$ as $|\vec{x}|\to\infty$ in any direction.  The theorem is split into two cases, and one of these assumptions is used for each case.  
To understand the implications of Assumptions 2 and 3, note that a radially unbounded function always attains its minimal value at some point.
Assumption 2 guarantees that $\min_\vec{x}\max_\vec{y}S(\vec{x},\vec{y})$ exists, while Assumption 3 guarantees that $\max_\vec{y}\min_\vec{x}S(\vec{x},\vec{y})$ exists.  In other words, these assumptions guarantee the existence of minimax equilibria. \footnote{A minimax equilibrium must be a stationary point of $S$ and a steady   state of the dynamics, but the converse is not necessarily true.}

\section{Newtonian dynamics}
At this point, some readers may prefer to skip to the formal statements of the above theorems, and their proofs. Others may find the following physical analogy helpful, because it gives meaning to the Lyapunov function used in the proofs.  Below it will be shown that gradient descent-ascent can be approximated by a Newtonian dynamical system that conserves energy. The error of the approximation is a residual force that violates energy conservation.  From this physical viewpoint, the ``globally less convex'' condition mentioned above makes the residual force frictional, so that it cannot increase the energy of the system. 

Define $S_{\vec{x}}$ as the gradient of $S$ with respect to $\vec{x}$, i.e., $(S_{\vec{x}})_i = \partial S/\partial x_i$, and define $S_{\vec{y}}$ similarly.  Then the gradient descent-ascent equations \eqref{eq:GradientComponents} take the compact form
$\dot{\vec{x}}=-S_{\vec{x}}$ and $\dot{\vec{y}}=S_{\vec{y}}$.
Differentiation with respect to time yields
\begin{equation}\label{eq:ForceDecomposition}
\begin{pmatrix}
\ddot{\vec{x}}\\
\ddot{\vec{y}}
\end{pmatrix}
=
\begin{pmatrix}
-S_{\vec{x}\vec{x}}&  - S_{\vec{x}\vec{y}}\\
S_{\vec{y}\vec{x}}&  S_{\vec{y}\vec{y}}
\end{pmatrix}
\begin{pmatrix}
\dot{\vec{x}}\\
\dot{\vec{y}}
\end{pmatrix}
\end{equation}
where $S_{\vec{x}\vec{x}}$ is the matrix of partial derivatives of $S$ with respect to the $x$ variables, i.e., $(S_{\vec{x}\vec{x}})_{ij}=\partial^2S/\partial x_i\partial x_j$, and so on. Defining $\vec{z}=(\vec{x},\vec{y})$, this can be written in the form
\begin{equation}\label{eq:Newton}
\ddot{\vec{z}} = -\Phi_\vec{z} -K_A(\vec{z})\dot{\vec{z}} -K_S(\vec{z})\dot{\vec{z}} 
\end{equation}
Here $\Phi_\vec{z}$ is the gradient of the function $\Phi(\vec{z})=-rS(\vec{x},\vec{y})$, where $r$ is a scalar parameter.
The matrices $K_A$ and $K_S$ are defined by
\begin{equation}
K_A(\vec{z})=
\begin{pmatrix}
0 &  S_{\vec{x}\vec{y}}\\
- S_{\vec{y}\vec{x}}& 0
\end{pmatrix}
\qquad
K_S(\vec{z})=
\begin{pmatrix}
S_{\vec{x}\vec{x}}-rI& 0\\
0 & -(S_{\vec{y}\vec{y}}-rI)
\end{pmatrix}
\end{equation}
According to Newton's Second Law, the acceleration of an object is proportional to the force exerted on it. Therefore the three terms on the right hand side of Eq.\ \eqref{eq:Newton} can be interpreted as forces.  The first force could be called ``electric,'' as it depends on the gradient of the potential function $\Phi$.  The second force is {\em perpendicular} to velocity, because the matrix $K_A$ is {\em antisymmetric}, $K_A^T=-K_A$.  This term is analogous to the magnetic force on a charged particle, which is also perpendicular to velocity.  If these two forces (``electric'' and ``magnetic'') were the only ones that appeared in the dynamics, it would be a Newtonian system with the familiar property of energy conservation. The total energy is the sum of the kinetic energy $|\dot{\vec{z}}|^2/2$ and the potential energy $\Phi(\vec{z})$, or
\begin{equation}\label{eq:TotalEnergy}
L=\half|\dot{\vec{x}}|^2 + \half|\dot{\vec{y}}|^2 -rS(\vec{x},\vec{y})
\end{equation}
Because of the property of energy conservation, once set in motion a Newtonian dynamical system typically does not come to rest.
\footnote{This approach is a generalization of the method used by Platt and Barr for the special case of a payoff function that is linear in $\vec{y}$, which arises in constrained optimization \cite{Platt87}. They used the Lyapunov function \eqref{eq:TotalEnergy} with $r=0$.}

The third force in Eq.\ \eqref{eq:Newton} violates energy conservation. It will be called the ``residual,'' as it is the error of approximating gradient descent-ascent by a conservative Newtonian system.  A straightforward calculation shows that $\dot{L} = -\dot{\vec{z}}^T K_S\dot{\vec{z}}$.  Whether the residual force tends to make the total energy increase or decrease depends on the matrix $K_S$. For example, if $K_S$ is positive definite, then the total energy will be nonincreasing with time.  In this case, the residual force can be regarded as frictional, because it dissipates energy. When friction is added to a Newtonian system, it does come to rest. Note that the matrix $K_S$ depends on the scalar parameter $r$. As will be described below, the ``globally less convex'' condition guarantees that $K_S$ is positive definite for some choice of $r$.

\section{Extremal eigenvalues}
The ``globally less convex'' condition will now be made precise using two Hessian matrices.  The matrix $S_{\vec{x}\vec{x}}$ consists of partial derivatives of $S$ with respect to the $x$ variables, i.e., $(S_{\vec{x}\vec{x}})_{ij}=\partial^2S/\partial x_i\partial x_j$. The Hessian $S_{\vec{y}\vec{y}}$ is defined similarly.  It will be useful to summarize these Hessians by two extremal eigenvalues:
\begin{subequations}\label{eq:InfSupDef}
\begin{align}
\lambda_{inf}(S_{\vec{x}\vec{x}}) &= \inf_{\vec{x},\vec{y}}\lambda_{min}(S_{\vec{x}\vec{x}}(\vec{x},\vec{y}))\\
\lambda_{sup}(S_{\vec{y}\vec{y}}) &= \sup_{\vec{x},\vec{y}}\lambda_{max}(S_{\vec{y}\vec{y}}(\vec{x},\vec{y}))
\end{align}
\end{subequations}
Here $\lambda_{min}(S_{\vec{x}\vec{x}}(\vec{x},\vec{y}))$ is defined as the smallest eigenvalue of $S_{\vec{x}\vec{x}}$ at the point $(\vec{x},\vec{y})$, while $\lambda_{max}(S_{\vec{y}\vec{y}}(\vec{x},\vec{y}))$ is the largest eigenvalue of $S_{\vec{y}\vec{y}}$ at the point $(\vec{x},\vec{y})$.
The infimum and supremum are taken over the domain $R^m\times R^n$.

These numbers are related to the concepts of convexity and concavity.  If $\lambda_{inf}(S_{\vec{x}\vec{x}})\geq 0$, then $S_{\vec{x}\vec{x}}$ is positive semidefinite everywhere. This implies that $S$ is convex in $\vec{x}$ for any fixed $\vec{y}$, by the second-order condition for convexity. \footnote{If $\lambda_{inf}(S_{\vec{x}\vec{x}}) > 0$ (strictly positive), then   $S_{\vec{x}\vec{x}}$ is strictly positive definite everywhere, and $S$ is   strictly convex in $\vec{x}$ for any fixed $\vec{y}$.  If   $\lambda_{inf}(S_{\vec{x}\vec{x}}) = 0$, $S$ might or might not be strictly   convex in $\vec{x}$, depending on whether the infimum is actually attained at   some point.}  Similarly, if $\lambda_{sup}(S_{\vec{y}\vec{y}})\leq 0$, then $S_{\vec{y}\vec{y}}$ is negative semidefinite everywhere, and $S$ is concave in $\vec{y}$ for any fixed $\vec{x}$.

\section{Convergence of bounded trajectories}
Using the extremal eigenvalues (\ref{eq:InfSupDef}), the following sufficient condition for convergence to steady states can be formulated.
\begin{theorem}\label{thm:BoundedTrajectory}
Suppose that the steady states of the gradient descent-ascent dynamics \eqref{eq:GradientComponents} are isolated, and
\begin{equation}\label{eq:Convergence}
\lambda_{sup}(S_{\vec{y}\vec{y}}) < \lambda_{inf}(S_{\vec{x}\vec{x}})~.
\end{equation}
Then any bounded trajectory converges to a steady state.
\end{theorem}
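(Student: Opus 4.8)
The plan is to use the total energy $L$ of Eq.~\eqref{eq:TotalEnergy} as a Lyapunov function, with the free parameter $r$ chosen to exploit the gap in \eqref{eq:Convergence}. First I would pick a real number $r$ with
\[
\lambda_{sup}(S_{\vec{y}\vec{y}}) \;<\; r \;<\; \lambda_{inf}(S_{\vec{x}\vec{x}}),
\]
which is possible precisely because \eqref{eq:Convergence} is a \emph{strict} inequality. For this $r$, the definitions \eqref{eq:InfSupDef} give, at every point $(\vec{x},\vec{y})$, the matrix inequalities $S_{\vec{x}\vec{x}} - rI \succeq (\lambda_{inf}(S_{\vec{x}\vec{x}})-r)I \succ 0$ and $-(S_{\vec{y}\vec{y}} - rI) \succeq (r-\lambda_{sup}(S_{\vec{y}\vec{y}}))I \succ 0$. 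Hence the two diagonal blocks of $K_S(\vec{z})$ are uniformly positive definite, so there is a constant $c>0$ with $\dot{\vec{z}}^{T}K_S(\vec{z})\dot{\vec{z}} \ge c|\dot{\vec{z}}|^2$ for all $\vec{z}$ and all $\dot{\vec{z}}$.

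Next I would invoke the identity $\dot{L} = -\dot{\vec{z}}^{T}K_S(\vec{z})\dot{\vec{z}}$ already derived above (a consequence of the force decomposition \eqref{eq:Newton} together with the antisymmetry of $K_A$). Combined with the lower bound just established, this yields $\dot{L} \le -c|\dot{\vec{z}}|^2 \le 0$ along every trajectory, so $L$ is nonincreasing in time. Moreover $\dot{L}=0$ at a point forces $\dot{\vec{z}}=0$ there, i.e.\ $S_{\vec{x}}=0$ and $S_{\vec{y}}=0$, which by \eqref{eq:GradientComponents} is exactly the condition that the point is a steady state; conversely $L$ is constant along any trajectory resting at a steady state. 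Thus $L$ is nonincreasing and constant only at steady states, i.e.\ a Lyapunov function in the sense reviewed in the introduction. (One should also note that $L$, regarded as a function of $\vec{z}=(\vec{x},\vec{y})$ via $\dot{\vec{x}}=-S_{\vec{x}}$, $\dot{\vec{y}}=S_{\vec{y}}$, has continuous partial derivatives, which follows from the twice-differentiability of $S$ since differentiating $|S_{\vec{x}}|^2$ and $|S_{\vec{y}}|^2$ produces only Hessian entries.)

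With $L$ in hand, the conclusion is immediate: since the steady states are assumed to be isolated points, the corollary of LaSalle's Invariance Principle quoted in the introduction applies and every bounded trajectory converges to a steady state.

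As for the main obstacle, there really is none of substance once the energy $L$ and the decomposition \eqref{eq:Newton} are available — the difficulty has been front-loaded into the construction of $L$, which the paper has already motivated physically. The only point requiring a little care is the equivalence ``$\dot{L}=0 \Leftrightarrow$ steady state'': it uses the strict inequality in \eqref{eq:Convergence} (to make $K_S$ strictly, not merely positive semi-, definite, so that $\dot{\vec{z}}^{T}K_S\dot{\vec{z}}=0$ implies $\dot{\vec{z}}=0$), and it relies on the fact that for first-order gradient descent-ascent the condition $\dot{\vec{z}}=0$ \emph{is} the steady-state condition, so no separate comparison between critical points of $L$ and critical points of $S$ is needed.
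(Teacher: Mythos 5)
Your proposal is correct and follows essentially the same route as the paper: both choose $r$ strictly between $\lambda_{sup}(S_{\vec{y}\vec{y}})$ and $\lambda_{inf}(S_{\vec{x}\vec{x}})$, use $L = T - rS$ as the Lyapunov function, compute $\dot{L} = -\dot{\vec{x}}^T(S_{\vec{x}\vec{x}}-rI)\dot{\vec{x}} + \dot{\vec{y}}^T(S_{\vec{y}\vec{y}}-rI)\dot{\vec{y}} = -\dot{\vec{z}}^T K_S \dot{\vec{z}} \le 0$ with equality only at steady states, and conclude via the LaSalle-type corollary. Your added remarks on the uniform definiteness constant and the smoothness of $L$ are harmless refinements of the same argument.
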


The inequality \eqref{eq:Convergence} is the precise definition of the phrase ``globally less convex.''\footnote{To be clear, this inequality is a stronger condition than the statement that $S$ is {\em locally} less convex in $\vec{y}$ than in $\vec{x}$, or $\lambda_{max}(S_{\vec{y}\vec{y}}(\vec{x},\vec{y})) < \lambda_{min}(S_{\vec{x}\vec{x}}(\vec{x},\vec{y}))$ for all $(\vec{x},\vec{y})$.}
Note that Theorem \ref{thm:BoundedTrajectory} completely excludes the possibility of periodic orbits. The only way that a trajectory can fail to converge to a steady state is by diverging to infinity.

\begin{proof}
  The time derivative of the kinetic energy $T=   |\dot{\vec{x}}|^2/2+|\dot{\vec{y}}|^2/2$  is   $\dot{T}=-\dot{\vec{x}}^TS_{\vec{x}\vec{x}}\dot{\vec{x}}+   \dot{\vec{y}}^TS_{\vec{y}\vec{y}}\dot{\vec{y}}$, while the time derivative of   the payoff function is $\dot{S}=-|\dot{\vec{x}}|^2 +|\dot{\vec{y}}|^2$.   Define the function $L=T-rS$, which can be regarded as the sum of kinetic   energy $T$ and ``potential energy'' $-rS$ (see Eq.\ \eqref{eq:TotalEnergy} for more on the physical interpretation).  Its time derivative is
$\dot{L}=
-\dot{\vec{x}}^T\left(S_{\vec{x}\vec{x}}-rI\right)\dot{\vec{x}} +
\dot{\vec{y}}^T\left(S_{\vec{y}\vec{y}}-rI\right)\dot{\vec{y}}~.
$
Choose $r$ so that $\lambda_{sup}(S_{\vec{y}\vec{y}}) < r < \lambda_{inf}(S_{\vec{x}\vec{x}})$.  By definition, $S_{\vec{x}\vec{x}}-rI$ is positive definite everywhere while $S_{\vec{y}\vec{y}}-rI$ is negative definite everywhere.\footnote{Eq.\ (\ref{eq:Convergence}) could be replaced by   the slightly weaker condition that such an $r$ exists.  } It follows that $\dot{L}\leq 0$, with equality only at steady states of the dynamics, so $L$ is a Lyapunov function and convergence of bounded trajectories to steady states follows.
\end{proof}

\section{The convex-concave case}
Here Theorem \ref{thm:BoundedTrajectory} is compared with previous results concerning convergence of gradient descent-ascent.  The following corollary is useful for comparison.
\begin{corollary}\label{cor:Kinetic}
Suppose that the steady states of the gradient descent-ascent dynamics 
 \eqref{eq:GradientComponents} are isolated, and
\begin{equation}\label{eq:ConvexConcave}
\lambda_{sup}(S_{\vec{y}\vec{y}})<0<\lambda_{inf}(S_{\vec{x}\vec{x}}).
\end{equation}
Then any bounded trajectory converges to a steady state.
\end{corollary}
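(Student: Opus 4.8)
The plan is to obtain this as a direct specialization of Theorem~\ref{thm:BoundedTrajectory}. The hypothesis \eqref{eq:ConvexConcave} asserts $\lambda_{sup}(S_{\vec{y}\vec{y}})<0<\lambda_{inf}(S_{\vec{x}\vec{x}})$, which by transitivity of the strict ordering immediately yields $\lambda_{sup}(S_{\vec{y}\vec{y}})<\lambda_{inf}(S_{\vec{x}\vec{x}})$, i.e.\ condition \eqref{eq:Convergence}. Together with the standing assumption that the steady states are isolated, every hypothesis of Theorem~\ref{thm:BoundedTrajectory} is met, so the conclusion carries over verbatim.

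For a more self-contained argument, and one that explains why this corollary is tagged ``kinetic,'' I would instead rerun the Lyapunov computation from the proof of Theorem~\ref{thm:BoundedTrajectory} with the particular choice $r=0$, which is legitimate here precisely because the common gap between the two extremal eigenvalues straddles zero. With $r=0$ the Lyapunov function $L=T-rS$ reduces to the pure kinetic energy $T=\half|\dot{\vec{x}}|^2+\half|\dot{\vec{y}}|^2$, whose time derivative is $\dot{T}=-\dot{\vec{x}}^TS_{\vec{x}\vec{x}}\dot{\vec{x}}+\dot{\vec{y}}^TS_{\vec{y}\vec{y}}\dot{\vec{y}}$. Assumption \eqref{eq:ConvexConcave} makes $S_{\vec{x}\vec{x}}$ strictly positive definite everywhere and $S_{\vec{y}\vec{y}}$ strictly negative definite everywhere, so $\dot{T}\leq 0$, with equality forcing $\dot{\vec{x}}=\dot{\vec{y}}=0$; by \eqref{eq:GradientComponents} this says $S_{\vec{x}}=0$ and $S_{\vec{y}}=0$, i.e.\ the point is a steady state. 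Thus $T$ is a Lyapunov function and, the steady states being isolated, every bounded trajectory converges to one.

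There is essentially no obstacle: the corollary is a strict restriction of Theorem~\ref{thm:BoundedTrajectory}'s hypothesis. The only subtlety, already handled in the proof of the theorem, is verifying that the equality set of $\dot{L}\leq 0$ is exactly the set of steady states and not some larger invariant set; strict definiteness of the two Hessian blocks is what closes this gap.
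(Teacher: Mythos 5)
Your proposal is correct and matches the paper's treatment: the corollary is presented there as a direct special case of Theorem~\ref{thm:BoundedTrajectory}, and the paper likewise notes that the Lyapunov function for this case is simply the kinetic energy, i.e.\ the choice $r=0$ in the proof of the theorem. Both your one-line specialization and your self-contained $r=0$ computation are exactly the intended argument.
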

This special case of Theorem \ref{thm:BoundedTrajectory} is very similar to the following theorem proved by
Kose\cite{kose1956ssv} and Uzawa\cite{Uzawa58}.\footnote{Actually they considered a modification of gradient descent-ascent that respects nonnegativity constraints.}
\begin{proposition}
\label{thm:Uzawa}
  Suppose that $S$ is strictly convex in $\vec{x}$ for any fixed $\vec{y}$ and   strictly concave in $\vec{y}$ for any fixed $\vec{x}$.
\footnote{In fact, Uzawa only required weak concavity in $\vec{y}$, because he was interested in the case where $S$ is linear in $\vec{y}$, which arises in Lagrangian methods for optimization. This complicates the theorem somewhat.}
  Then gradient   descent-ascent converges to a saddle point if one exists.
\end{proposition}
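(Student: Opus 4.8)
The energy-based Lyapunov function $L=T-rS$ used for Theorem~\ref{thm:BoundedTrajectory} is not directly available here, for two reasons: strict convexity of $S$ in $\vec{x}$ only forces $S_{\vec{x}\vec{x}}$ to be positive definite pointwise, so $\lambda_{inf}(S_{\vec{x}\vec{x}})$ may equal zero and no admissible $r$ need exist; and even when a Lyapunov function of the form $L=T-rS$ does exist, the kinetic energy it contains does not constrain the position variables, so it cannot by itself establish boundedness of trajectories. The plan is therefore to replace the energy by the classical Lyapunov function for this problem, the squared distance to a saddle point, which simultaneously delivers boundedness and convergence.

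Let $(\vec{x}^*,\vec{y}^*)$ be a saddle point, so that $S(\vec{x}^*,\vec{y})\leq S(\vec{x}^*,\vec{y}^*)\leq S(\vec{x},\vec{y}^*)$ for all $\vec{x},\vec{y}$. Since $S$ is differentiable, $\vec{x}^*$ minimizes $S(\cdot,\vec{y}^*)$ and $\vec{y}^*$ maximizes $S(\vec{x}^*,\cdot)$, so $S_{\vec{x}}(\vec{x}^*,\vec{y}^*)=0$ and $S_{\vec{y}}(\vec{x}^*,\vec{y}^*)=0$; in particular $(\vec{x}^*,\vec{y}^*)$ is a steady state of \eqref{eq:GradientComponents}. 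I would set $W(\vec{x},\vec{y})=\half|\vec{x}-\vec{x}^*|^2+\half|\vec{y}-\vec{y}^*|^2$ and differentiate along a trajectory using $\dot{\vec{x}}=-S_{\vec{x}}$ and $\dot{\vec{y}}=S_{\vec{y}}$, obtaining $\dot{W}=-(\vec{x}-\vec{x}^*)^TS_{\vec{x}}(\vec{x},\vec{y})+(\vec{y}-\vec{y}^*)^TS_{\vec{y}}(\vec{x},\vec{y})$.

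The heart of the argument is to control these two terms with the first-order characterizations of convexity and concavity. Convexity of $\vec{x}\mapsto S(\vec{x},\vec{y})$ gives $-(\vec{x}-\vec{x}^*)^TS_{\vec{x}}(\vec{x},\vec{y})\leq S(\vec{x}^*,\vec{y})-S(\vec{x},\vec{y})$, while concavity of $\vec{y}\mapsto S(\vec{x},\vec{y})$ gives $(\vec{y}-\vec{y}^*)^TS_{\vec{y}}(\vec{x},\vec{y})\leq S(\vec{x},\vec{y})-S(\vec{x},\vec{y}^*)$. Adding these, the $S(\vec{x},\vec{y})$ terms cancel, and the saddle inequalities $S(\vec{x}^*,\vec{y})\leq S(\vec{x}^*,\vec{y}^*)$ and $S(\vec{x},\vec{y}^*)\geq S(\vec{x}^*,\vec{y}^*)$ then yield
\[
\dot{W}\;\leq\; S(\vec{x}^*,\vec{y})-S(\vec{x},\vec{y}^*)\;\leq\; 0 .
\]

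To finish, I would upgrade this to a strict decrease away from the saddle: if $\vec{x}\neq\vec{x}^*$ the convexity inequality is strict because $S$ is \emph{strictly} convex in $\vec{x}$, while if $\vec{x}=\vec{x}^*$ but $\vec{y}\neq\vec{y}^*$ the concavity inequality is strict because $S$ is \emph{strictly} concave in $\vec{y}$; in either case $\dot{W}<0$. Hence $\dot{W}$ vanishes only at $(\vec{x}^*,\vec{y}^*)$, which shows both that this is the unique steady state (so the steady states are trivially isolated) and that $W$ is a Lyapunov function. Since $W$ is radially unbounded its sublevel sets are bounded, so every trajectory is bounded, and the LaSalle/Lyapunov argument recalled in the introduction then gives convergence of every trajectory to $(\vec{x}^*,\vec{y}^*)$. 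The one delicate point is the strictness step: the convexity and concavity inequalities must be combined so that at least one of them is strict whenever $(\vec{x},\vec{y})\neq(\vec{x}^*,\vec{y}^*)$; everything else is routine differentiation and bookkeeping.
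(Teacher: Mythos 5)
Your proof is correct and follows essentially the same route as the paper, which (in its brief sketch) also uses the squared Euclidean distance to the saddle point as the Lyapunov function together with the first-order convexity/concavity conditions and the saddle inequalities. You have simply filled in the details the paper leaves implicit, including the strictness argument needed to conclude that the saddle point is the unique steady state and that the decrease is strict away from it.
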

A {\em saddle point} $(\vec{x}^\ast,\vec{y}^\ast)$ is defined as a point at which the inequality
$
S(\vec{x}^\ast,\vec{y})\leq S(\vec{x}^\ast,\vec{y}^\ast)\leq S(\vec{x},\vec{y}^\ast)
$
holds for all $\vec{x}$ and $\vec{y}$.  For a strictly convex-concave function $S$, it can be shown that there is at most one saddle point.
Furthermore, a saddle point is equivalent to a stationary point of $S$, or a steady state of gradient descent-ascent.

\begin{proof}[Proof sketch]
The  first-order condition for convexity implies that the Euclidean distance from   the saddle point is a Lyapunov function.  This implies that any bounded   trajectory converges to the saddle point.  Furthermore the Euclidean distance   is radially unbounded, so all trajectories are bounded.
\end{proof}

The Kose-Uzawa Theorem is similar to Corollary \ref{cor:Kinetic}, because condition \eqref{eq:ConvexConcave} implies that $S$ is strictly convex-concave.  However, the converse does not hold, so the Corollary is slightly weaker than the Theorem.  Also, the payoff function has to be twice differentiable in the Corollary, but only once differentiable in the Theorem.

The Theorem requires the assumption that a saddle point exists, in order to use the Euclidean distance from the saddle point as the Lyapunov function. Then all trajectories are guaranteed to be bounded, since the distance is radially unbounded.  On the other hand, Corollary \ref{cor:Kinetic} requires the assumption of a bounded trajectory, instead of the assumption that a saddle point exists. However, in the next section it will be shown that the assumption of a bounded trajectory actually follows from Eq.\ \eqref{eq:ConvexConcave}.

The Lyapunov function for Corollary 1 is simply the kinetic energy ($r=0$ in the proof of Theorem 1). With the convex-concave assumption, the kinetic energy is nonincreasing. Without the assumption, the kinetic energy may behave nonmonotonically with time.  This failure is connected with the possibility of multiple steady states, which can exist if the convex-concave condition is violated. Even if the system eventually converges to one state, it may approach other steady states closely before it finally converges, slowing down and speeding up each time. This is why it was necessary to combine the kinetic energy with the payoff function in Theorem 1 to produce a Lyapunov function that is nonincreasing.

\section{Boundedness of trajectories}
Since Corollary \ref{cor:Kinetic} is basically equivalent to the older Kose-Uzawa Theorem, the novel and interesting case is where condition \eqref{eq:ConvexConcave} is violated, but condition \eqref{eq:Convergence} is true.  Then the extremal eigenvalues have the same sign. For example, if both are positive, Eq.\ \eqref{eq:Convergence} means that $S$ is ``less convex'' in $\vec{y}$ than in $\vec{x}$. If the extremal eigenvalues are negative, then Eq.\ \eqref{eq:Convergence} means that $S$ is ``more concave'' in $\vec{y}$ than in $\vec{x}$.

Theorem \ref{thm:BoundedTrajectory} and Corollary \ref{cor:Kinetic} establish convergence of bounded trajectories to steady states. It remains to be determined whether trajectories are bounded. This can be proven 
after imposing additional conditions on $S$, by showing that the Lyapunov function is radially unbounded.  This is not obvious, as $L=T-rS$ is a combination of the kinetic energy and the payoff function.  While the kinetic energy is lower bounded by zero, the payoff function might diverge to $+\infty$ in some directions, and $-\infty$ in others.

\begin{theorem}\label{thm:MainTheorem}
  Suppose that   $\lambda_{sup}(S_{\vec{y}\vec{y}})<\lambda_{inf}(S_{\vec{x}\vec{x}})$.  If   either
\begin{enumerate}
\item 
$\lambda_{inf}(S_{\vec{x}\vec{x}})>0$ and 
$-V(\vec{y})=-\min_\vec{x} S(\vec{x},\vec{y})$ is radially unbounded, or
\item $\lambda_{sup}(S_{\vec{y}\vec{y}})<0$ and 
$U(\vec{x})=\max_{\vec{y}}S(\vec{x},\vec{y})$ is radially unbounded
\end{enumerate}
is also satisfied, then any trajectory of gradient descent-ascent is bounded.
\end{theorem}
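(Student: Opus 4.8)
The plan is to recycle the function $L=T-rS$ from the proof of Theorem~\ref{thm:BoundedTrajectory}, where along trajectories $T=\half|\dot{\vec x}|^2+\half|\dot{\vec y}|^2=\half|S_{\vec x}|^2+\half|S_{\vec y}|^2$ and $L$ is thus a genuine function of the state $\vec z=(\vec x,\vec y)$, and to verify that its sublevel sets $\{\vec z:L(\vec z)\le c\}$ in $R^m\times R^n$ are bounded. The computation in that proof shows $\dot L\le 0$ whenever $r$ satisfies $\lambda_{sup}(S_{\vec y\vec y})<r<\lambda_{inf}(S_{\vec x\vec x})$, and it does not use isolation of steady states; so once the sublevel sets are known to be bounded, every trajectory stays inside $\{L\le L(\vec z(0))\}$ and hence is bounded, by the criterion noted in the introduction. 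It therefore suffices to choose $r$ well and bound the sublevel sets, which I do in Case~1; Case~2 then follows by a symmetry.

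In Case~1, set $a:=\lambda_{inf}(S_{\vec x\vec x})>0$. Since $\lambda_{sup}(S_{\vec y\vec y})<a$ and $a>0$, one can pick $r$ with $\max\{0,\lambda_{sup}(S_{\vec y\vec y})\}<r<a$; then $0<r<a$ and $r$ lies in the window needed above. For each fixed $\vec y$ the function $S(\cdot,\vec y)$ is $a$-strongly convex, hence coercive, so it attains its minimum $V(\vec y)$ at a point $\vec{\xi}(\vec y)$ with $S_{\vec x}(\vec{\xi}(\vec y),\vec y)=0$. Two elementary consequences of $a$-strong convexity, namely the gradient-domination bound $\half|S_{\vec x}(\vec x,\vec y)|^2\ge a\,(S(\vec x,\vec y)-V(\vec y))$ and the quadratic-growth bound $S(\vec x,\vec y)-V(\vec y)\ge\tfrac a2|\vec x-\vec{\xi}(\vec y)|^2$, chain together, using $a-r>0$ and $S-V\ge 0$, to give $\half|S_{\vec x}|^2-rS=\bigl(\half|S_{\vec x}|^2-r(S-V)\bigr)-rV(\vec y)\ge\tfrac{a(a-r)}{2}|\vec x-\vec{\xi}(\vec y)|^2-rV(\vec y)$, and hence
\[
L(\vec x,\vec y)\ \ge\ \tfrac{a(a-r)}{2}\,\bigl|\vec x-\vec{\xi}(\vec y)\bigr|^2\ -\ rV(\vec y)\ \ge\ r\,\bigl(-V(\vec y)\bigr).
\]

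Boundedness of $\{L\le c\}$ now drops out in two steps. First, $L(\vec x,\vec y)\le c$ forces $-V(\vec y)\le c/r$; since $-V$ is radially unbounded and $r>0$, this confines $\vec y$ to a compact set $B$. Second, on $B$ the strong-convexity estimate $V(\vec y)\ge S(\vec 0,\vec y)-\tfrac1{2a}|S_{\vec x}(\vec 0,\vec y)|^2$ together with $V(\vec y)\le S(\vec 0,\vec y)$ shows $V$ is bounded on $B$, and then $\tfrac a2|\vec{\xi}(\vec y)|^2\le S(\vec 0,\vec y)-V(\vec y)$ shows $\vec{\xi}$ is bounded on $B$; feeding these back into the displayed inequality bounds $|\vec x-\vec{\xi}(\vec y)|$, hence $|\vec x|$. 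So $\{L\le c\}$ is bounded, proving Case~1. For Case~2, the substitution $\tilde{\vec x}=\vec y$, $\tilde{\vec y}=\vec x$, $\tilde S(\tilde{\vec x},\tilde{\vec y})=-S(\vec y,\vec x)$ leaves Eqs.~\eqref{eq:GradientComponents} invariant, satisfies $\lambda_{inf}(\tilde S_{\tilde{\vec x}\tilde{\vec x}})=-\lambda_{sup}(S_{\vec y\vec y})$ and $\lambda_{sup}(\tilde S_{\tilde{\vec y}\tilde{\vec y}})=-\lambda_{inf}(S_{\vec x\vec x})$, and identifies $\min_{\tilde{\vec x}}\tilde S(\tilde{\vec x},\cdot\,)$ with $-U$, so that Case~2 for $S$ is exactly Case~1 for $\tilde S$. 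The main obstacle I anticipate is assembling the displayed lower bound on $L$ correctly --- recognizing that the two strong-convexity estimates must be chained and that $r<\lambda_{inf}(S_{\vec x\vec x})$ is precisely the slack that makes the chaining close --- after which everything is routine, modulo the standard facts that the minimizer $\vec{\xi}(\vec y)$ and the minimum value $V(\vec y)$ are well defined and locally bounded.
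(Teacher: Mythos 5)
Your proof is correct and follows essentially the same route as the paper: the same Lyapunov function $L=T-rS$ viewed as a function of the state, the same choice of $r$ in the window $(\max\{0,\lambda_{sup}(S_{\vec{y}\vec{y}})\},\lambda_{inf}(S_{\vec{x}\vec{x}}))$, and the same gradient-domination inequality (the paper's Lemma \ref{lem:FunctionGrad}) to bound $L$ from below by $r\,(-V(\vec{y}))$ and conclude radial unboundedness. The only differences are minor: to control the $|\vec{x}|\to\infty$ direction you use quadratic growth about the minimizer $\vec{\xi}(\vec{y})$ where the paper invokes its Lemma \ref{lem:LinearBound} on $|S_{\vec{x}}|$, and you make explicit the $\vec{x}\leftrightarrow\vec{y}$, $S\mapsto -S$ symmetry that the paper dismisses as ``similar arguments'' for Case 2.
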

Note that the first condition implies the existence of a minimax equilibrium, $\max_{\vec{y}}\min_{\vec{x}}S(\vec{x},\vec{y})$.  This follows because $\lambda_{inf}(S_{\vec{x}\vec{x}})>0$ implies the existence of $V(\vec{x})=\min_{\vec{x}}S(\vec{x},\vec{y})$ (see Lemma \ref{lem:QuadraticBound}, which guarantees that $S\to\infty$ as $|\vec{x}|\to\infty$.  The minimum is unique by strict convexity of $S$ in $\vec{x}$.  The existence of a maximum of $V$ follows from the fact that $-V$ is radially unbounded, but the maximum need not be unique.  Similarly, the second condition of the theorem guarantees the existence of a minimax equilibrium
$\min_{\vec{x}}\max_{\vec{y}}S(\vec{x},\vec{y})$.
Theorem \ref{thm:MainTheorem} has the following corollary. 
\begin{corollary}\label{cor:Kinetic2}
  Suppose that   $\lambda_{sup}(S_{\vec{y}\vec{y}})<0<\lambda_{inf}(S_{\vec{x}\vec{x}})$. Then   any trajectory of gradient descent-ascent is bounded.
\end{corollary}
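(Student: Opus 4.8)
The plan is to obtain this corollary directly from Theorem \ref{thm:MainTheorem}, by checking that its hypothesis $\lambda_{sup}(S_{\vec{y}\vec{y}})<0<\lambda_{inf}(S_{\vec{x}\vec{x}})$ is a special case. First, that chain of inequalities immediately gives $\lambda_{sup}(S_{\vec{y}\vec{y}})<\lambda_{inf}(S_{\vec{x}\vec{x}})$, the standing assumption of Theorem \ref{thm:MainTheorem}, and also supplies $\lambda_{inf}(S_{\vec{x}\vec{x}})>0$, which is the first half of alternative~1 of that theorem. So the only thing left to establish is that $-V(\vec{y})=-\min_\vec{x}S(\vec{x},\vec{y})$ is radially unbounded; then alternative~1 applies and every trajectory is bounded. (One could equally well verify alternative~2, that $U(\vec{x})=\max_\vec{y}S(\vec{x},\vec{y})$ is radially unbounded, by a symmetric argument; I will carry out the $V$ version.)

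Before arguing radial unboundedness I would confirm that $V$ is well defined. Since $\lambda_{inf}(S_{\vec{x}\vec{x}})>0$, Lemma \ref{lem:QuadraticBound} supplies a uniform quadratic lower bound showing that $S(\vec{x},\vec{y})\to\infty$ as $|\vec{x}|\to\infty$ for each fixed $\vec{y}$, so $\vec{x}\mapsto S(\vec{x},\vec{y})$ attains its minimum; the minimizer is unique by strict convexity. Hence $V(\vec{y})=\min_\vec{x}S(\vec{x},\vec{y})$ exists for every $\vec{y}$.

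The substantive step is to show $-V(\vec{y})\to\infty$ as $|\vec{y}|\to\infty$ in any direction. Here I would invoke the ``concave'' counterpart of the previous bound: because $\lambda_{sup}(S_{\vec{y}\vec{y}})<0$, one has $S_{\vec{y}\vec{y}}\preceq\lambda_{sup}(S_{\vec{y}\vec{y}})I$ everywhere, so applying Lemma \ref{lem:QuadraticBound} to $-S(\vec{x}_0,\cdot)$ for a fixed base point $(\vec{x}_0,\vec{y}_0)$ (equivalently, Taylor's theorem with integral remainder) gives
\begin{equation}
S(\vec{x}_0,\vec{y})\leq S(\vec{x}_0,\vec{y}_0)+S_{\vec{y}}(\vec{x}_0,\vec{y}_0)^T(\vec{y}-\vec{y}_0)+\half\,\lambda_{sup}(S_{\vec{y}\vec{y}})\,|\vec{y}-\vec{y}_0|^2 .
\end{equation}
Since $\lambda_{sup}(S_{\vec{y}\vec{y}})<0$, the right-hand side tends to $-\infty$ as $|\vec{y}|\to\infty$. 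But $V(\vec{y})=\min_\vec{x}S(\vec{x},\vec{y})\leq S(\vec{x}_0,\vec{y})$, so $V(\vec{y})\to-\infty$, i.e.\ $-V$ is radially unbounded. With $\lambda_{inf}(S_{\vec{x}\vec{x}})>0$ and $-V$ radially unbounded, alternative~1 of Theorem \ref{thm:MainTheorem} is met and boundedness of all trajectories follows.

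I do not expect a genuine obstacle here: the corollary is essentially a matter of verifying the weaker theorem's hypotheses. The one point needing a little care is justifying the uniform quadratic upper bound on $S(\vec{x}_0,\cdot)$ and observing that, because the curvature bound $\lambda_{sup}(S_{\vec{y}\vec{y}})<0$ is uniform over all of $R^m\times R^n$, the estimate is direction-independent, so divergence along every ray — the meaning of ``radially unbounded'' used in Theorem \ref{thm:MainTheorem} — really does hold.
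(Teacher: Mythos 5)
Your proposal is correct, but it takes a different branch of Theorem \ref{thm:MainTheorem} than the paper does, and proves the needed radial unboundedness by a different (and more elementary) mechanism. The paper verifies alternative~2: it shows $U(\vec{x})=\max_{\vec{y}}S(\vec{x},\vec{y})$ is radially unbounded by differentiating through the maximizer, using the envelope/Schur-complement identity $U_{\vec{x}\vec{x}}=S_{\vec{x}\vec{x}}-S_{\vec{x}\vec{y}}S_{\vec{y}\vec{y}}^{-1}S_{\vec{y}\vec{x}}$ at $(\vec{x},\vec{y}^\ast(\vec{x}))$, noting the correction term is negative semidefinite so that $\lambda_{inf}(U_{\vec{x}\vec{x}})\geq\lambda_{inf}(S_{\vec{x}\vec{x}})>0$, and then applying Lemma \ref{lem:QuadraticBound} to $U$ itself. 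You instead verify alternative~1, bounding $V(\vec{y})\leq S(\vec{x}_0,\vec{y})$ for a fixed $\vec{x}_0$ and using the uniform concavity bound $S_{\vec{y}\vec{y}}\preceq\lambda_{sup}(S_{\vec{y}\vec{y}})I$ (Lemma \ref{lem:QuadraticBound} applied to $-S(\vec{x}_0,\cdot)$) to force $S(\vec{x}_0,\vec{y})\to-\infty$ quadratically, hence $-V(\vec{y})\to\infty$. Your route is logically cleaner: it needs only the comparison $V\leq S(\vec{x}_0,\cdot)$ and avoids the paper's implicit assumptions that $\vec{y}^\ast(\vec{x})$ exists, is smooth, and that $U$ is twice differentiable, none of which are justified in the paper's proof. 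What the paper's computation buys in exchange is a quantitative fact of independent interest --- that the value function $U$ inherits the full convexity modulus $\lambda_{inf}(S_{\vec{x}\vec{x}})$ of $S$ --- though for the corollary the cruder bound $U(\vec{x})\geq S(\vec{x},\vec{y}_0)$ (the mirror image of your estimate) would have sufficed there too. Both arguments are valid under the corollary's hypotheses.
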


\section{Two examples}
Two examples illustrate the power of the new theorems.  For the quadratic payoff function $S=ax^2/2+bxy+cy^2/2$, gradient descent-ascent is a linear dynamical system. By eigenvalue analysis, it converges to the origin when $c<a$ and $b^2>ac$. Theorem \ref{thm:MainTheorem} gives precisely the same conditions.  In contrast, the Kose-Uzawa Theorem guarantees convergence when $c<0<a$, which is much more restrictive.

The dynamics $\dot{x} = y-f(x)$ and $\dot{y}=-x+g(y)$ is gradient descent-ascent on the payoff function $S = F(x)-xy+G(y)$, where $F'(x)=f(x)$ and $G'(y)=g(y)$.  This reduces to the linear dynamics considered above if $f$ and $g$ are linear, but is nonlinear otherwise.  Since $S_{xx}=f'(x)$ and $S_{yy}=g'(y)$, by Theorem \ref{thm:BoundedTrajectory} any bounded trajectory converges to a steady state if\footnote{Since essentially the same condition   can be obtained from Bendixson's negative criterion   \cite{guckenheimer2002nod}, Theorem \ref{thm:BoundedTrajectory} is not really   needed for this simple example. But Theorem \ref{thm:BoundedTrajectory} is   applicable in higher dimensions, while Bendixson's criterion is only   applicable to two dimensions.}  $\inf_x f'(x) > \sup_y g'(y)$.  For example, suppose that $f(x) = \mu(x^3/3-x)$ and $g(y) = - \alpha y$, where $\mu$ and $\alpha$ are nonnegative parameters.  
In the special case $\alpha=0$ this is equivalent to the van der Pol oscillator, which has a limit cycle and an unstable steady state at the origin. The limit cycle persists for small nonzero $\alpha$.  If $\alpha>\mu$, there is no limit cycle by Theorem 1.
Furthermore, Theorem \ref{thm:MainTheorem} can be used to prove that all trajectories are bounded.  Since both $\inf_x f'(x)=-\mu$ and $\sup_y g'(y)=-\alpha$ are negative, the convex-concave assumption and the Kose-Uzawa theorem are not relevant.

\section{Proving boundedness of trajectories}

Theorem \ref{thm:MainTheorem} and Corollary \ref{cor:Kinetic2} will be proven by establishing radial unboundedness of the Lyapunov function. For this purpose, it is necessary to have inequalities that characterize the global behavior of the payoff function and its derivatives using the extremal eigenvalues \eqref{eq:InfSupDef}.  The first lemma is related to Taylor's theorem, which locally approximates a function using its first derivative and Hessian. The lemma gives a global lower bound for a function using its first derivative and extremal eigenvalue of its Hessian.
\begin{lemma}\label{lem:QuadraticBound}
Suppose that $\lambda_{inf}(f_{\vec{x}\vec{x}})$ exists
(see Eqs.   \eqref{eq:InfSupDef} for definition). Then $f(\vec{x})$ has the quadratic lower bound
$f(\vec{x}) \geq
f(\vec{a}) + f_{\vec{x}}(\vec{a})^T(\vec{x}-\vec{a})
+ \lambda_{inf}(f_{\vec{x}\vec{x}})|\vec{x}-\vec{a}|^2/2$.
\end{lemma}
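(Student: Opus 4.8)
The plan is to reduce the claim to the first-order characterization of convexity by subtracting off a quadratic. Write $\mu = \lambda_{inf}(f_{\vec{x}\vec{x}})$ and set $g(\vec{x}) = f(\vec{x}) - \frac{\mu}{2}|\vec{x}|^2$. Because $\mu$ exists (is finite), the smallest eigenvalue of $f_{\vec{x}\vec{x}}(\vec{x})$ is at least $\mu$ at every point of the domain, so the Hessian of $g$, which is $g_{\vec{x}\vec{x}} = f_{\vec{x}\vec{x}} - \mu I$, is positive semidefinite everywhere. By the second-order condition for convexity (the same condition invoked when interpreting $\lambda_{inf}$ above), $g$ is convex on all of its domain.

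First I would then invoke the first-order condition for convexity of $g$: for any $\vec{a}$ and $\vec{x}$, $g(\vec{x}) \geq g(\vec{a}) + g_{\vec{x}}(\vec{a})^T(\vec{x}-\vec{a})$. Substituting $g_{\vec{x}}(\vec{a}) = f_{\vec{x}}(\vec{a}) - \mu\vec{a}$ together with the definition of $g$, and then simplifying the quadratic terms via the identity $\frac{\mu}{2}\bigl(|\vec{x}|^2 - |\vec{a}|^2\bigr) - \mu\,\vec{a}^T(\vec{x}-\vec{a}) = \frac{\mu}{2}|\vec{x}-\vec{a}|^2$, produces exactly the asserted lower bound.

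An equivalent route, which makes the stated kinship with Taylor's theorem explicit, restricts $f$ to the segment joining $\vec{a}$ to $\vec{x}$. With $\phi(t) = f(\vec{a} + t(\vec{x}-\vec{a}))$ one has $\phi'(0) = f_{\vec{x}}(\vec{a})^T(\vec{x}-\vec{a})$ and $\phi''(t) = (\vec{x}-\vec{a})^T f_{\vec{x}\vec{x}}\bigl(\vec{a}+t(\vec{x}-\vec{a})\bigr)(\vec{x}-\vec{a}) \geq \mu|\vec{x}-\vec{a}|^2$; Taylor's theorem with integral remainder, $f(\vec{x}) = \phi(1) = \phi(0) + \phi'(0) + \int_0^1 (1-t)\,\phi''(t)\,dt$, then yields the bound since $\int_0^1(1-t)\,dt = \frac{1}{2}$. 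Either way there is no substantive obstacle; the one point deserving care is the logical role of the hypothesis that $\lambda_{inf}(f_{\vec{x}\vec{x}})$ exists — it is precisely what upgrades the pointwise eigenvalue bound to a uniform one over the whole domain, and hence what makes $g$ \emph{globally} convex and the resulting estimate global rather than merely local.
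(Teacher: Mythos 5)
Your proof is correct and follows essentially the same route as the paper: subtract a quadratic with coefficient $\lambda_{inf}(f_{\vec{x}\vec{x}})/2$ to obtain a globally convex function (the paper centers the quadratic at $\vec{a}$, you center it at the origin, which is equivalent after your algebraic simplification), then apply the first-order condition for convexity. The Taylor-remainder variant you sketch is also fine, but no substantive difference from the paper's argument.
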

\begin{proof}
The function $\bar{f}(\vec{x}) = f(\vec{x}) - \lambda_{inf}(f_{\vec{x}\vec{x}})|\vec{x}-\vec{a}|^2 /2$
is convex, because its Hessian $\bar{f}_{\vec{x}\vec{x}}=f_{\vec{x}\vec{x}}-\lambda_{inf}(f_{\vec{x}\vec{x}})I$
is negative semidefinite everywhere. Apply the first-order condition for convexity to $\bar{f}$.
\end{proof}
The lemma has a simple consequence if $\lambda_{inf}(f_{\vec{x}\vec{x}})>0$. Then $f$ is radially unbounded, diverging at least quadratically with $|\vec{x}|$. This proves that $f$ has a minimum. Furthermore $f$ is strictly convex, so the minimum is unique.  The second lemma shows that the gradient of a function diverges at least linearly, provided that its extremal eigenvalue is of the appropriate sign.
\begin{lemma}\label{lem:LinearBound}
If $\lambda_{inf}(f_{\vec{x}\vec{x}})>0$, then
$|f_\vec{x}(\vec{x})-f_\vec{x}(\vec{a})|\geq \lambda_{inf}(f_{\vec{x}\vec{x}})|\vec{x}-\vec{a}|
$.
\end{lemma}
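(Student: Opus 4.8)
The plan is to reduce the claim to the standard strong monotonicity of the gradient of a strongly convex function, which follows quickly from Lemma \ref{lem:QuadraticBound}. Write $\mu = \lambda_{inf}(f_{\vec{x}\vec{x}}) > 0$. If $\vec{x} = \vec{a}$ both sides vanish, so I would assume $\vec{x} \neq \vec{a}$ for the rest of the argument.

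First I would apply Lemma \ref{lem:QuadraticBound} twice: once exactly as stated, and once with the roles of $\vec{x}$ and $\vec{a}$ interchanged. This yields $f(\vec{x}) \geq f(\vec{a}) + f_\vec{x}(\vec{a})^T(\vec{x}-\vec{a}) + \mu|\vec{x}-\vec{a}|^2/2$ and $f(\vec{a}) \geq f(\vec{x}) + f_\vec{x}(\vec{x})^T(\vec{a}-\vec{x}) + \mu|\vec{x}-\vec{a}|^2/2$. Adding the two inequalities, the function values $f(\vec{x})$ and $f(\vec{a})$ cancel and the two first-order terms combine into $(f_\vec{x}(\vec{a}) - f_\vec{x}(\vec{x}))^T(\vec{x}-\vec{a})$, so after rearranging one obtains the monotonicity estimate $(f_\vec{x}(\vec{x}) - f_\vec{x}(\vec{a}))^T(\vec{x}-\vec{a}) \geq \mu|\vec{x}-\vec{a}|^2$.

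Then I would finish with the Cauchy--Schwarz inequality. The left-hand side of the monotonicity estimate is at most $|f_\vec{x}(\vec{x}) - f_\vec{x}(\vec{a})|\,|\vec{x}-\vec{a}|$, hence $|f_\vec{x}(\vec{x}) - f_\vec{x}(\vec{a})|\,|\vec{x}-\vec{a}| \geq \mu|\vec{x}-\vec{a}|^2$, and dividing through by $|\vec{x}-\vec{a}| > 0$ gives exactly $|f_\vec{x}(\vec{x}) - f_\vec{x}(\vec{a})| \geq \mu|\vec{x}-\vec{a}|$.

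I do not expect a genuine obstacle here; this is a routine consequence of strong convexity. The only points deserving care are the trivial case $\vec{x}=\vec{a}$ (and the legitimacy of dividing, which requires $\mu>0$, an assumption of the lemma) and keeping the signs straight when the two instances of Lemma \ref{lem:QuadraticBound} are added, so that the cross terms assemble into the monotonicity inequality rather than cancel. As an alternative one could observe directly that $f_\vec{x}(\vec{x}) - \mu\vec{x}$ is the gradient of the convex function $\vec{x} \mapsto f(\vec{x}) - \mu|\vec{x}|^2/2$ and is therefore a monotone map, which gives the same estimate; but reusing Lemma \ref{lem:QuadraticBound} is cleaner since it is already in hand.
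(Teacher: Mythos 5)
Your proposal is correct and follows essentially the same route as the paper's proof: apply Lemma \ref{lem:QuadraticBound} twice with the roles of $\vec{x}$ and $\vec{a}$ interchanged, add the two inequalities to obtain the strong monotonicity estimate $[f_\vec{x}(\vec{x})-f_\vec{x}(\vec{a})]^T(\vec{x}-\vec{a})\geq \lambda_{inf}(f_{\vec{x}\vec{x}})|\vec{x}-\vec{a}|^2$, and finish with Cauchy--Schwarz and division by $|\vec{x}-\vec{a}|$. Your explicit handling of the trivial case $\vec{x}=\vec{a}$ is a small tidiness the paper omits but changes nothing substantive.
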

\begin{proof}
Switching the roles of $\vec{x}$ and $\vec{a}$ in Lemma \ref{lem:QuadraticBound} yields
$f(\vec{a}) \geq f(\vec{x}) + f_{\vec{x}}(\vec{x})^T(\vec{a}-\vec{x}) + 
\lambda_{inf}(f_{\vec{x}\vec{x}})|\vec{x}-\vec{a}|^2/2$. Add this to the original inequality of Lemma
\ref{lem:QuadraticBound} to obtain
$
[f_{\vec{x}}(\vec{x})-f_{\vec{x}}(\vec{a})]^T(\vec{x}-\vec{a})\geq
\lambda_{inf}(f_{\vec{x}\vec{x}})|\vec{x}-\vec{a}|^2$
Use the Cauchy-Schwarz inequality, and divide by $|\vec{x}-\vec{a}|$. \end{proof}

The final lemma bounds a function using the magnitude of its gradient.
\begin{lemma}\label{lem:FunctionGrad}
Suppose that $\lambda_{inf}(f_{\vec{x}\vec{x}})>0$, and let $f_{min}$ be the minimal value of $f$. Then 
$f(\vec{a}) 
\leq f_{min}
+|f_\vec{x}(\vec{a})|^2/(2\lambda_{inf}(f_{\vec{x}\vec{x}}))$.
\end{lemma}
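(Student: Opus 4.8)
The plan is to start from the quadratic lower bound of Lemma \ref{lem:QuadraticBound}, taken with base point $\vec{a}$, and then minimize the resulting right-hand side over the free variable. Writing $\lambda = \lambda_{inf}(f_{\vec{x}\vec{x}}) > 0$ for brevity, Lemma \ref{lem:QuadraticBound} gives, for every $\vec{x}$,
\[
f(\vec{x}) \geq f(\vec{a}) + f_{\vec{x}}(\vec{a})^T(\vec{x}-\vec{a}) + \frac{\lambda}{2}\,|\vec{x}-\vec{a}|^2 .
\]
The right-hand side is a strictly convex quadratic in $\vec{x}$ (since $\lambda>0$), minimized at $\vec{x}-\vec{a} = -f_{\vec{x}}(\vec{a})/\lambda$. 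Substituting this choice back in, the linear and quadratic terms combine to $-|f_{\vec{x}}(\vec{a})|^2/(2\lambda)$, so the bound becomes
\[
f(\vec{x}) \geq f(\vec{a}) - \frac{|f_{\vec{x}}(\vec{a})|^2}{2\lambda} \qquad \text{for all } \vec{x}.
\]

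Next I would invoke the remark following Lemma \ref{lem:QuadraticBound}: since $\lambda_{inf}(f_{\vec{x}\vec{x}})>0$, the function $f$ is radially unbounded and hence attains its minimum value $f_{min}$ at some point. Evaluating the displayed inequality at that minimizer yields $f_{min} \geq f(\vec{a}) - |f_{\vec{x}}(\vec{a})|^2/(2\lambda)$, and rearranging gives exactly the claimed inequality $f(\vec{a}) \leq f_{min} + |f_{\vec{x}}(\vec{a})|^2/(2\lambda)$.

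There is essentially no serious obstacle: the argument reduces to completing the square in a quadratic and to the already-established existence of a minimizer. The only point needing a moment's care is the justification that the minimum of $f$ is actually attained (rather than merely an infimum), which is precisely what the radial-unboundedness consequence of Lemma \ref{lem:QuadraticBound} supplies; alternatively one could state the conclusion with an infimum in place of $f_{min}$ and dispense with that remark entirely.
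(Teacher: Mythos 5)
Your proposal is correct and follows essentially the same route as the paper: both complete the square in the quadratic lower bound of Lemma \ref{lem:QuadraticBound} to obtain $f(\vec{x}) \geq f(\vec{a}) - |f_{\vec{x}}(\vec{a})|^2/(2\lambda_{inf}(f_{\vec{x}\vec{x}}))$ for all $\vec{x}$, and then replace the left-hand side by $f_{min}$, whose existence is guaranteed by $\lambda_{inf}(f_{\vec{x}\vec{x}})>0$. No gaps.
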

\begin{proof}
Lemma \ref{lem:QuadraticBound} implies
$f(\vec{x}) \geq f(\vec{a})+\min_\vec{b} \left\{
f_\vec{x}(\vec{a})^T\vec{b} + 
\lambda_{inf}(f_{\vec{x}\vec{x}})|\vec{b}|^2/2\right\}
=
f(\vec{a})-|f_\vec{x}(\vec{a})|^2/(2\lambda_{inf}(f_{\vec{x}\vec{x}})) 
$
Since the inequality holds for all $\vec{x}$ and $\vec{a}$, the left hand side can be replaced by its minimum, which is guaranteed to exist by the condition $\lambda_{inf}(f_{\vec{x}\vec{x}})>0$.
\end{proof}

\begin{proof}[Proof of Theorem \ref{thm:MainTheorem}]
  Boundedness of trajectories will be shown using the same Lyapunov function as   in Theorem \ref{thm:BoundedTrajectory}, with a specific choice of the   parameter $r$. The goal of the proof is to show that the Lyapunov function is radially unbounded.
The payoff function is the major problem here, because it has neither lower or upper bound. It could potentially diverge to $-\infty$ or $+\infty$ in different directions. (Either kind of divergence could be harmful, depending on the sign of $r$). On the other hand, the kinetic energy is nonnegative. The basic idea of the proof is to show that the growth of the kinetic energy more than balances out any negative divergence of $-rS$.

In particular, we'll consider the first case of the Theorem, $\lambda_{inf}(S_{\vec{x}\vec{x}})>0$, and show that $T^x-rS$ for an appropriate $r$ can be lower bounded by $-V(\vec{y})$, which is radially unbounded.

By the condition $\lambda_{inf}(S_{\vec{x}\vec{x}})>0$ and Lemma \ref{lem:QuadraticBound}, $S(\vec{x},\vec{y})\to \infty$ as $|\vec{x}|\to\infty$. Therefore $V(\vec{y})=\min_{\vec{x}}S(\vec{x},\vec{y})$ is well-defined for all $\vec{y}$. Now apply Lemma \ref{lem:FunctionGrad} to $f(\vec{x})=S(\vec{x},\vec{y})$, yielding
\begin{equation}\label{eq:UBound}
S(\vec{x},\vec{y})\leq V(\vec{y})+\frac{|S_\vec{x}(\vec{x},\vec{y})|^2}{2\lambda_{inf}(S_{\vec{x}\vec{x}})}
\end{equation}

Now use the Lyapunov function   $L=T-rS$ with $r=\gamma\lambda_{inf}(S_{\vec{x}\vec{x}})$, where $0<\gamma<1$   and   $\lambda_{sup}(S_{\vec{y}\vec{y}})<\gamma\lambda_{inf}(S_{\vec{x}\vec{x}})<\lambda_{inf}(S_{\vec{x}\vec{x}})$.   Such a $\gamma$ is guaranteed to exist, since   $\lambda_{sup}(S_{\vec{y}\vec{y}})<\lambda_{inf}(S_{\vec{x}\vec{x}})$ and   $\lambda_{inf}(S_{\vec{x}\vec{x}})>0$.
By the proof of Theorem \ref{thm:BoundedTrajectory},
\[
L=\half |S_{\vec{x}}|^2 + \half |S_{\vec{y}}|^2 -
\gamma \lambda_{inf}(S_{\vec{x}\vec{x}}) S
\]
is a Lyapunov function. Substituting the inequality \eqref{eq:UBound} yields
\[
L \geq \half (1-\gamma) |S_\vec{x}|^2 + \half|S_{\vec{y}}|^2 - \gamma
\lambda_{inf}(S_{\vec{x}\vec{x}})V(\vec{y})
\]
Since $-V(\vec{y})$ is radially unbounded by assumption, $L\to\infty$  if $|\vec{y}|\to\infty$. This is almost a proof that $L$ is radially unbounded.

To complete the proof, consider the behavior of $L$ if $|\vec{x}|\to\infty$ while $|\vec{y}|$ stays bounded.  By Lemma \ref{lem:LinearBound}, $|S_\vec{x}(\vec{x},\vec{y})-S_\vec{x}(\vec{0},\vec{y})| \geq \lambda_{inf}(S_{\vec{x}\vec{x}}) |\vec{x}|^2 $.  If $\vec{y}$ is bounded, so also is $S_\vec{x}(\vec{0},\vec{y})$. Therefore the inequality implies that $|S_{\vec{x}}|\to\infty$ as $|\vec{x}|\to\infty$ while $|\vec{y}|$ stays bounded.

The above arguments show that $L$ is radially unbounded in $(\vec{x},\vec{y})$, i.e., $L\to\infty$ as $|\vec{x}|^2+|\vec{y}|^2\to\infty$. Boundedness of trajectories follows.  Similar arguments can be made to prove the second case of Theorem 2.
\end{proof}

\begin{proof}[Proof of Corollary \ref{cor:Kinetic2}]
Since  $\lambda_{sup}(S_{\vec{y}\vec{y}})<0$, it only remains to be shown that   $U(\vec{x})=\max_\vec{y} S(\vec{x},\vec{y})$ is radially unbounded, and then  the second case of Theorem \ref{thm:MainTheorem} can be applied. It turns out that this follows   from the condition $\lambda_{inf}(S_{\vec{x}\vec{x}})>0$, as shown below.
By Lemma \ref{lem:QuadraticBound},
$U(\vec{x}) \geq U(\vec{0}) + U_\vec{x}(\vec{0})^T\vec{x}+\lambda_{inf}(U_{\vec{x}\vec{x}})|\vec{x}|^2/2
$.
The Hessian of $U$ is $U_{\vec{x}\vec{x}} = S_{\vec{x}\vec{x}}-
S_{\vec{x}\vec{y}}S_{\vec{y}\vec{y}}^{-1}S_{\vec{y}\vec{x}}$, where
the right hand side is evaluated at $(\vec{x},\vec{y}^\ast(\vec{x}))$
and $\vec{y}^\ast(\vec{x})=\argmax_\vec{y}S(\vec{x},\vec{y})$.
But $S_{\vec{y}\vec{y}}$ is negative definite, so that $S_{\vec{x}\vec{y}}S_{\vec{y}\vec{y}}^{-1}S_{\vec{y}\vec{x}}$
is negative definite.
This means that $\lambda_{inf}(U_{\vec{x}\vec{x}})\geq
\lambda_{inf}(S_{\vec{x}\vec{x}})$, yielding the lower bound
$U(\vec{x}) \geq U(\vec{0}) + U_\vec{x}(\vec{0})^T\vec{x} +  \lambda_{inf}(S_{\vec{x}\vec{x}})|\vec{x}|^2/2
$.
Therefore the inequality $\lambda_{inf}(S_{\vec{x}\vec{x}})>0$ implies that $U(\vec{x})$ is radially unbounded, and Theorem \ref{thm:MainTheorem} can be applied to prove boundedness.
\end{proof}

\section{Discussion}
Kose and Uzawa used the Euclidean distance from the steady state as a Lyapunov function for gradient descent-ascent. Their construction is applicable when the payoff function is convex-concave. In this case, no more than a single steady state may exist.

This paper introduced new Lyapunov functions for gradient descent-ascent. For the convex-concave case, the kinetic energy was used as a Lyapunov function. This led to Corollary \ref{cor:Kinetic}, which is almost identical to the Kose-Uzawa Theorem.  More generally, a combination of the kinetic energy and the payoff function was used as a Lyapunov function. This led to Theorems \ref{thm:BoundedTrajectory} and \ref{thm:MainTheorem}, which are more powerful than the Kose-Uzawa theorem. This added power was demonstrated by two examples, one linear and the other nonlinear. Without the convex-concave assumption, more than one steady state may exist.

The conditions of Theorem \ref{thm:MainTheorem} guarantee the existence of a minimax equilibrium of the game. While these conditions also assure that gradient descent-ascent will converge to a steady state, it need not be a minimax equilibrium. Therefore, gradient descent-ascent is a method of identifying candidate equilibria.

If the speeds of ascent and descent are made extremely different, then gradient descent-ascent is typically expected to converge to a local minimax equilibrium. For example, if the differential equation for $\vec{y}$ were replaced by $\tau\dot{\vec{y}}=S_{\vec{y}}$, then $\vec{y}$ would track $\argmax_{\vec{y}}S(\vec{x},\vec{y})$ instantaneously, in the limit as $\tau\to 0$. Therefore the $\vec{x}$ dynamics would generically converge to a local minimum of $U(\vec{x})$.

The relative convexity condition \eqref{eq:Convergence} implies that at least one of the inequalities $\lambda_{sup}(S_{\vec{y}\vec{y}}) < 0$ or $\lambda_{inf}(S_{\vec{x}\vec{x}})>0$ must hold. In other words, the payoff function must be strictly concave in $\vec{y}$ or strictly convex in $\vec{x}$. The Kose-Uzawa Theorem requires that both of these statements be true, while the present theorems require only one. It is an interesting open question whether general convergence results can be established for the case that neither statement is true.

\section*{Acknowledgments}
This work was stimulated by a collaboration with John Hopfield, Jeff Lagarias, and Tom Richardson on the dynamics of neural networks.  I have benefited from conversations with Sam Roweis.

\bibliography{gradient5}

\begin{thebibliography}{10}

\bibitem{kose1956ssv}
T.~Kose.
\newblock {Solutions of Saddle Value Problems by Differential Equations}.
\newblock {\em Econometrica}, 24:59--70, 1956.

\bibitem{Uzawa58}
H.~Uzawa.
\newblock Gradient method for concave programming, {II} global stability in the
  strictly concave case.
\newblock In K.~J. Arrow, L.~Hurwicz, and H.~Uzawa, editors, {\em Studies in
  linear and non-linear programming}, pages 127--132. Stanford Univ., 1958.

\bibitem{arrow1960sgp}
K.J. Arrow and L.~Hurwicz.
\newblock {Stability of the Gradient Process in n-Person Games}.
\newblock {\em Journal of the Society for Industrial and Applied Mathematics},
  8(2):280--294, 1960.

\bibitem{singh2000ncg}
S.~Singh, M.~Kearns, and Y.~Mansour.
\newblock {Nash convergence of gradient dynamics in general-sum games}.
\newblock {\em Proceedings of the Sixteenth Conference on Uncertainty in
  Artificial Intelligence}, pages 541--548, 2000.

\bibitem{ArrowHurwicz58gmc1}
K.~J. Arrow and L.~Hurwicz.
\newblock Gradient method for concave programming, {I} local results.
\newblock In K.~J. Arrow, L.~Hurwicz, and H.~Uzawa, editors, {\em Studies in
  linear and non-linear programming}, pages 117--126. Stanford Univ., 1958.

\bibitem{mjolsness1990ato}
E.~Mjolsness and C.~Garrett.
\newblock {Algebraic transformations of objective functions.}
\newblock {\em Neural Networks}, 3(6):651--669, 1990.

\bibitem{seung1998mah}
H.S. Seung, T.J. Richardson, JC~Lagarias, and J.J. Hopfield.
\newblock {Minimax and Hamiltonian dynamics of excitatory-inhibitory networks}.
\newblock {\em Advances in Neural Information Processing Systems}, 10, 1998.

\bibitem{gao2004nnc}
X.B. Gao, L.Z. Liao, and W.~Xue.
\newblock {A neural network for a class of convex quadratic minimax problems
  with constraints}.
\newblock {\em IEEE Trans. Neural Networks}, 15(3):622--628, 2004.

\bibitem{kerner1997chs}
E.H. Kerner.
\newblock {Comment on Hamiltonian structures for the n-dimensional
  Lotka--Volterra equations}.
\newblock {\em Journal of Mathematical Physics}, 38(2):1218, 1997.

\bibitem{samuelson1974bla}
P.A. Samuelson.
\newblock {A Biological Least-Action Principle for the Ecological Model of
  Volterra-Lotka}.
\newblock {\em Proceedings of the National Academy of Sciences},
  71(8):3041--3044, 1974.

\bibitem{logemann2004abn}
H.~Logemann and E.P. Ryan.
\newblock {Asymptotic Behaviour of Nonlinear Systems}.
\newblock {\em American Mathematical Monthly}, 111:864--889, 2004.

\bibitem{slotine1991anc}
J.J.E. Slotine and W.~Li.
\newblock {\em {Applied nonlinear control}}.
\newblock Prentice Hall, 1991.

\bibitem{lasalle1960eas}
J.~P. LaSalle.
\newblock {The Extent of Asymptotic Stability}.
\newblock {\em Proceedings of the National Academy of Sciences}, 46:363, 1960.

\bibitem{curry1944msd}
H.B. Curry.
\newblock {The method of steepest descent for nonlinear minimization problems}.
\newblock {\em Quart. Appl. Math}, 2:250--261, 1944.

\bibitem{absil16cid}
PA~Absil, R.~Mahony, and B.~Andrews.
\newblock {Convergence of the Iterates of Descent Methods for Analytic Cost
  Functions}.
\newblock {\em SIAM J. Optim.}, 16:531--547, 2005.

\bibitem{hofbauer1996edb}
J.~Hofbauer.
\newblock {Evolutionary dynamics for bimatrix games: A Hamiltonian system?}
\newblock {\em Journal of Mathematical Biology}, 34(5):675--688, 1996.

\bibitem{hopfield1986cnc}
JJ~Hopfield and DW~Tank.
\newblock {Computing with neural circuits: a model}.
\newblock {\em Science}, 233(4764):625, 1986.

\bibitem{Platt87}
J.~C. Platt and A.~H. Barr.
\newblock Constrained differential optimization for neural networks.
\newblock Technical Report Caltech-CS-TR-88-17, Caltech, 1987.

\bibitem{guckenheimer2002nod}
J.~Guckenheimer and P.~Holmes.
\newblock {\em {Nonlinear Oscillations, Dynamical Systems, and Bifurcations of
  Vector Fields}}.
\newblock Springer, 2002.

\end{thebibliography}
\bibliographystyle{unsrt}

\end{document}